\documentclass[reqno,12pt]{amsart}
\usepackage{bbm}
\usepackage[all,pdf]{xy}
\usepackage{epsfig}
\usepackage{amsmath}
\usepackage{amssymb}
\usepackage{amscd}
\usepackage{graphicx}
\usepackage[colorlinks=true]{hyperref}
\hypersetup{urlcolor=red, citecolor=blue}
\makeatletter
\@namedef{subjclassname@2020}{%
	\textup{2020} Mathematics Subject Classification}

\topmargin=0pt

\oddsidemargin=0pt

\evensidemargin=0pt

\textwidth=15cm

\textheight=22cm

\raggedbottom

\overfullrule5pt

\newtheorem{thm}{Theorem}[section]
\newtheorem{lem}[thm]{Lemma}

\newtheorem{cl}[thm]{Claim}

\newtheorem{cor}[thm]{Corollary}

\newtheorem{rem}[thm]{Remark}

\def \N {\mathbb N}

\def \Z {\mathbb Z}

\def \M {\mathcal M}

\parskip 1.0ex
\numberwithin{equation}{section}

\begin{document}
	\title[minimal frequently stable is almost automorphic]{minimal frequently stable is almost automorphic}

	\author{Leiye Xu and Zongrui Hu}
\address[L. Xu and Z. Hu]{Department of Mathematics, University of Science and Technology of
	China, Hefei, Anhui, 230026, P.R. China}

\email{leoasa@mail.ustc.edu.cn, zongrui@mail.ustc.edu.cn}

\subjclass[2010]{Primary: 54H20, 37B05}

\keywords{minimal, frequently stable, almost automorphic
}

\thanks{}
\begin{abstract} We show  that a minimal toplogical dynamical system that is frequently stable  if and only if it is
	almost automorphic.
\end{abstract}

\maketitle
\section{Introduction}

	Throughout this paper, 	a topological dynamical system (t.d.s.) is a pair $(X, T )$ consisting of a compact metric space $X$ and a continuous transformation $T: X\to X$. Equicontinuous systems are known to have simple dynamical behaviours. By the well
known Halmos-von Neumann theorem, a transitive equicontinuous system is conjugate to
a minimal rotation on a compact abelian metric group  \cite{HV}. The notion of mean equicontinuity was introduced
by Li, Tu and Ye \cite{LTY} (which is equivalent to the notion of mean-L-stability in the early paper of
Fomin \cite{F}). It was proved \cite{LTY} that a minimal mean equicontinuous system has discrete spectrum. We refer to the survey \cite{LYY} for more related subjects.

In the study of mean equicontinuous t.d.s., Garc\'ia-Ramos, J\"ager and Ye introduced the notion of  frequently stable (see Section \ref{s-4} for definition). In \cite[Theorem 3.4]{GJY},
 Garc\'ia-Ramos, J\"ager and  Ye proved that a minimal mean equicontinuous t.d.s. is almost automorphic if and only if it is  frequently stable. They proposed a question:
	Does there exist a minimal t.d.s. that is frequently stable but not
	almost automorphic? In this paper, we give a negative answer.
\begin{thm}\label{thm-B}
	Let $(X, T)$ be a minimal t.d.s. Then $(X, T)$ is frequently stable  if and only if it is almost automorphic.
\end{thm}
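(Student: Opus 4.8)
The plan is to run both implications through the maximal equicontinuous factor $\pi\colon(X,T)\to(Y,S)$. By the Halmos--von Neumann theorem $(Y,S)$ is a minimal rotation on a compact abelian metric group, hence uniquely ergodic; denote its Haar measure by $\nu$. I will use the standard fact that, for a minimal system, being almost automorphic is equivalent to $\pi$ being an almost one-to-one extension, and (by minimality together with upper semicontinuity of the fibre map) to the existence of a single point $y_0\in Y$ with $|\pi^{-1}(y_0)|=1$; thus ``not almost automorphic'' means every fibre contains at least two points. It is worth emphasizing at the outset that \cite[Theorem 3.4]{GJY} cannot be applied as a black box, since a minimal almost automorphic system need not be mean equicontinuous: certain Toeplitz systems are almost automorphic yet have positive entropy, hence are not mean equicontinuous, as minimal mean equicontinuous systems have discrete spectrum and in particular zero entropy. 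Consequently both directions must be extracted directly from the almost one-to-one structure rather than routed through mean equicontinuity.

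For almost automorphic $\Rightarrow$ frequently stable I would fix a point $x$ with $\pi^{-1}(\pi(x))=\{x\}$ and a point $y$ close to it. Equicontinuity of $(Y,S)$ keeps $S^n\pi(x)$ and $S^n\pi(y)$ uniformly close for all $n$, so the entire difficulty is confined to the fibres: $T^nx$ and $T^ny$ are forced within $\epsilon$ at every time $n$ for which $S^n\pi(x)$ lies in a region of $Y$ over which the fibres have small diameter. Since the set of singleton fibres is residual and, using unique ergodicity of $(Y,S)$, carries full $\nu$-measure, these good regions can be chosen of measure arbitrarily close to $1$; unique ergodicity of $(Y,S)$ then guarantees that the orbit of $\pi(x)$ returns to such a region with upper Banach density one, which is exactly frequent stability. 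The step I expect to cost the most here is a uniformity issue: the almost one-to-one hypothesis gives thin fibres only pointwise, so I must upgrade it to a uniform modulus for $\pi^{-1}$ over a large compact subset of the good set and control the lift of $y$ along the whole orbit.

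The substantial direction, answering the question of Garc\'ia-Ramos, J\"ager and Ye, is frequently stable $\Rightarrow$ almost automorphic, which I would prove by contraposition: assuming the system is not almost automorphic I must violate frequent stability. Concretely, I would aim to establish the dichotomy that a minimal system which is not frequently stable is ``frequently sensitive,'' i.e.\ there is $\epsilon_0>0$ so that arbitrarily close pairs $x,y$ have their orbits $\epsilon_0$-separated on a time set of positive lower Banach density (equivalently, the $\epsilon_0$-close times fail to have upper Banach density one). The task then becomes: starting only from the information that every fibre of $\pi$ is nontrivial, manufacture, near any prescribed point and at every scale $\delta>0$, a pair $x,y$ with $d(x,y)<\delta$ whose orbits separate by a definite amount frequently in this quantitative sense.

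This construction is where I expect the main obstacle to lie. A first reduction would be to produce a uniform gap, namely a constant $c>0$ with $\operatorname{diam}\pi^{-1}(y)\ge c$ for all $y$; upper semicontinuity makes $\{y:\operatorname{diam}\pi^{-1}(y)\ge c\}$ closed, but since $T$ need not preserve fibre diameters this set is not obviously $S$-invariant, so even obtaining the uniform gap already requires care --- presumably by separating the fibre pairs into proximal and distal ones and exploiting that a distal pair stays uniformly apart along its entire orbit while the gap is transported by minimality of $(X,T)$. Granting the uniform gap, the heart of the argument is to convert it into genuine \emph{frequency} of separation rather than mere recurrence; here I would use the recurrence structure of $(Y,S)$ and the invariant measure $\nu$ on the base to estimate from below, uniformly in $\delta$, the lower Banach density of the separation times, thereby contradicting frequent stability. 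Turning the purely topological nontriviality of the fibres into this density lower bound is the crux of the whole proof.
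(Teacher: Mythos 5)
Your plan for the hard direction mis-negates the definition of frequent stability, and this is a fatal gap. Frequent stability of $x$ asserts $\overline{D}\{i:\text{diam}(T^iB_\delta(x))>\epsilon\}<1$, so its failure requires the bad times to have upper density \emph{equal to one} for every $\delta$ (equivalently, the good times to have lower density zero). Your ``frequent sensitivity'' target --- separation times of positive lower Banach density, which as you note is the same as the close times having upper Banach density below one --- does not contradict this: a point whose bad times are exactly the even integers has $\underline{BD}(\text{bad})=\tfrac12>0$ yet $\overline{D}(\text{bad})=\tfrac12<1$, hence is still frequently stable. So even if your construction succeeded, it would prove nothing. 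What is actually needed, and what the paper proves (in the stronger form that $\underline{BD}(\text{bad})<1$ at a single point already forces almost automorphy), is separation at \emph{nearly all} times in arbitrarily long windows, and your proposal contains no mechanism for this. Your step (a), the uniform gap $\epsilon_0=\inf_y\text{diam}(\pi_{eq}^{-1}(y))>0$, is standard and matches the paper's \eqref{eq-11}; but the crux is the paper's Claim \ref{c-2}, which rests on the sensitive-set theorem of Huang--Lu--Ye (Theorem \ref{thm-s}): every finite subset of a fiber $\pi_{eq}^{-1}(y)$ is an S-set, so an arbitrarily small ball $B_{\gamma_0}(x_0)$ can be sent, at some time $n_i$, onto a set that shadows an entire fiber for the next $i$ iterates (fibers map to fibers, and uniform continuity up to time $i$ preserves the shadowing), forcing $\text{diam}(T^n\overline{B_{\gamma_0}(x_0)})\ge\epsilon_0-\tfrac1i$ throughout the window $[n_i,n_i+i)$. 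The paper then plays this off against the hypothesis by an ergodic-theoretic argument on the hyperspace $(2^X,T)$: the hypothesis produces an ergodic measure $\nu_e$ giving positive mass to small-diameter sets, Claim \ref{c-1} (via minimality, Lemma \ref{l-1}, and semi-openness, Lemma \ref{l-2}) shows $\nu_e$-many elements contain a fixed ball $\overline{B_{\gamma_0}(x_0)}$, and Lindenstrauss's pointwise ergodic theorem along a tempered F{\o}lner subsequence of the windows yields $\nu_e(\mathcal{E}_{\ge\epsilon_2})=1$, a contradiction. Your appeal to ``the recurrence structure of $(Y,S)$ and the invariant measure $\nu$ on the base'' cannot substitute: the base measure sees nothing about how fast fiber separation develops, which is exactly why the argument must move to $2^X$ and to sensitivity.

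There is also an error in your easy direction: the claim that the residual set of singleton fibres ``carries full $\nu$-measure'' by unique ergodicity is false --- residual sets can be null, and for non-regular Toeplitz systems (precisely the almost automorphic, non--mean-equicontinuous examples you yourself invoke) the singleton-fibre set has $\nu_{eq}$-measure zero. Fortunately this over-engineering is unnecessary: frequent stability only requires the good times to have positive lower density, so one positive-measure neighborhood $B_\eta(y_0)$ of a single singleton fibre, over which fibre preimages have diameter $\le\epsilon$ by upper semicontinuity, suffices; strict ergodicity of $(X_{eq},T_{eq})$ then gives visits of $T_{eq}^n\pi_{eq}(x)$ to $B_\eta(y_0)$ with positive lower Banach density, which is exactly the paper's (and Garc\'ia-Ramos--J\"ager--Ye's) argument for (1)$\Rightarrow$(2) in Theorem \ref{thm-C}. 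Your observation that \cite{GJY} cannot be cited as a black box because the equivalence there assumes mean equicontinuity is correct and well taken, but neither of your two directions, as proposed, closes the gap that makes this theorem new.
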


	 The structure of the paper is as follows. In Section \ref{s-2}, we recall some basic notions and results. In Section \ref{s-4}, we prove Theorem \ref{thm-B}.
\section{Preliminaries}\label{s-2}
Throughout this paper, we denote by $\Z_+$ and $\N$ the sets of non-negative integers
and natural numbers, respectively. We denote the cardinality of a set $A$ by $|A|$.
\subsection{Subsets of $\Z_+$} Let $F$ be a subset of $\Z_+$. The upper density, upper
Banach density and lower
Banach density of $F$ are defined by
$$\overline{D}(F)=\limsup_{N\to\infty}\frac{|F\cap[0,N-1]|}{N}$$
$$\overline{BD}(F)=\limsup_{N-M\to\infty}\frac{|F\cap[M,N-1]|}{M-N}$$
$$\underline{BD}(F)=\liminf_{N-M\to\infty}\frac{|F\cap[M,N-1]|}{M-N}$$
respectively. It is clear that $\underline{BD}(F)\le\overline{D}(F) \le \overline{BD}(F)$ for any $F\subset \Z_+$. When a set
is denoted with braces, for example $\{A\}$, we simply write $\underline{BD}\{A\} =\underline{BD}(\{A\})$.

 \subsection{Topological dynamical systems}	 A t.d.s. $(X,T)$ is called minimal if $X$ contains no proper non-empty closed invariant
 subsets.  We denote the forward orbit of $x\in X$ by
 $$\text{Orb}(x, T ) =\{x, T x,\cdots\}$$
 and its orbit closure by $\overline{\text{orb}(x, T )}$. It is easy to verify that a t.d.s. is minimal if and only if every orbit is dense.
  A factor map $\pi: X\to Y$ between two t.d.s. $(X,T)$ and $(Y,S)$ is a continuous onto map
 which intertwines the actions; we say that $(Y,S)$ is a factor of $(X,T)$ and that $(X,T)$ is an
 extension of $(Y,S)$. A factor map $\pi: (X,T)\to (Y,S)$ is almost $1$-$1$
 if $\{x\in X :|\pi^{-1}\pi(x)=\{x\}\}$ is residual (that is, it is the countable intersection of
 dense open sets).

 A t.d.s. $(X,T)$ is said to be equicontinuous if for any $\epsilon > 0$, there is a $\delta> 0$ such that whenever
 $x, y\in X$ with $d(x, y) < \delta$, then $d(T^nx,T^ny) < \epsilon$ for all $n\in \Z_+$.  There is
 a smallest invariant equivalence relation $S_{eq}$ such that the quotient system $(X_{eq}=X/S_{eq},T)$ is
 equicontinuous \cite{EG}. The equivalence relation $S_{eq}$ is called the equicontinuous structure
 relation and the factor $(X/S_{eq},T)$ is called the maximal equicontinuous factor of
 $(X,T)$.  For a t.d.s. $(X,T)$, denote $(X_{eq}, T_{eq})$ as the  maximal equicontinuous factor  of $(X,T)$ and
 $\pi_{eq}: X\to X_{eq}$ as the factor map.
 
Now we introduce the  notion of sensitive set (S-set).  One can see \cite{YZ} for more information. Given a t.d.s. $(X,T)$. $ (x_i)_{i=1}^n\in X^n$ is a sensitive $n$-tuple if $ (x_i)_{i=1}^n$
 is not on the diagonal $\Delta^n(X)$, and for any open
 neighborhood $U_i$ of $x_i$ and any non-empty open subset $U$ of $X$, there
 is $k\ge 0$ such that $U\cap T^{-k}(U_i)\neq\emptyset$ for $i=1, 2,\cdots,n$. Let $K$ be a subset of $X$ with $|K|\ge 2$. We say $K$ is a sensitive set if for each $ (x_i)_{i=1}^n\in K^n\setminus \Delta^n(X)$, each
 neighbourhood $U_i$ of $x_i$ and each neighbourhood $U$ of $x\in X$ there are $k \in\N$  and $x_i\in U$
 with $T^kx_i\in  U_i$ for each $1\le i\le n$.   The following is from \cite[Theorem 8.2]{HLY}. 
 
 \begin{thm}\label{thm-s}
 	Let $(X, T )$ be a minimal t.d.s. and $\pi_{eq}: X\to X_{eq}$ be the factor
 	map to the maximal equicontinuous factor. Then:
 	\begin{itemize}
 		\item[(1)]Each S-set is contained in some $\pi_{eq}^{-1}y$ for some $y \in X_{eq}$.
 		\item[(2)]For each $y\in X_{eq}$, every $A\subset \pi^{-1}y$ with $|A|\ge 2$ is an S-set.
 	\end{itemize}
 \end{thm}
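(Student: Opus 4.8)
The plan is to prove the two inclusions of Theorem \ref{thm-s} separately: the equicontinuity of the factor drives (1), while a regional-proximality argument drives (2).

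For (1) I would argue by contradiction using an invariant metric on $(X_{eq},T_{eq})$. Since this system is minimal and equicontinuous it carries a compatible metric $\rho$ for which every $T_{eq}^k$ is an isometry. Suppose an S-set $K$ contained $a,b$ with $\pi_{eq}(a)\neq\pi_{eq}(b)$, and set $\eta=\rho(\pi_{eq}(a),\pi_{eq}(b))>0$. Applying the S-set condition to the pair $(a,b)$, with neighbourhoods $U_a\ni a$, $U_b\ni b$ chosen so small that $\pi_{eq}(U_a),\pi_{eq}(U_b)$ have $\rho$-diameter $<\eta/3$ and with a neighbourhood $U$ of any point chosen so that $\pi_{eq}(U)$ has $\rho$-diameter $<\eta/3$, I obtain $k$ and $z_1,z_2\in U$ with $T^kz_1\in U_a$, $T^kz_2\in U_b$. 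The isometry of $T_{eq}^k$ gives $\rho(\pi_{eq}(T^kz_1),\pi_{eq}(T^kz_2))=\rho(\pi_{eq}(z_1),\pi_{eq}(z_2))<\eta/3$, while these two images lie $\eta/3$-close to $\pi_{eq}(a)$ and $\pi_{eq}(b)$; the triangle inequality then yields $\eta<\eta$, a contradiction. Thus all points of $K$ share one fibre, proving (1).

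For (2) the target, for a tuple $(x_1,\dots,x_n)$ drawn from one fibre, is the sensitive-tuple property, which I would first reformulate as $(x_1,\dots,x_n)\in\overline{\bigcup_{k\ge0}(T^{(n)})^k(U\times\cdots\times U)}$ for every nonempty open $U$, where $T^{(n)}=T\times\cdots\times T$. The central reduction is a relocation step: it suffices to know that $(x_1,\dots,x_n)$ is a regionally proximal tuple, i.e. for every $\epsilon>0$ there are $s_1,\dots,s_n$ with $\max_{i,j}d(s_i,s_j)<\epsilon$ and some $k$ with $\max_i d(T^ks_i,x_i)<\epsilon$. Granting this, I fix a nonempty open $U$ and a smaller open $U_*$ with $\overline{U_*}\subseteq U$; by minimality there is $N_0$ so that every forward orbit meets $U_*$ within $N_0$ steps, and by uniform continuity of $T,\dots,T^{N_0}$ there is a scale $\epsilon_1$ bounding the spreading of nearby points over that window. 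Since the $x_i$ are not all equal, the admissible times $k$ are unbounded (otherwise letting $\epsilon\to0$ would force a single limit point $s$ with $T^{k_*}s=x_i$ for all $i$), so I may take $k\ge N_0$; pushing the near-diagonal tuple into $U_*$ by some $j\le N_0$ and replacing $k$ by $k-j$ places all $n$ points in $U$ and lands them in the prescribed neighbourhoods, establishing the sensitive-tuple property and hence (2).

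The heart of the matter, and the step I expect to be the main obstacle, is the regional-proximality claim itself: every tuple lying in a common fibre of $\pi_{eq}$ is a limit of forward images of near-diagonal tuples. For $n=2$ this is precisely the classical identity that the maximal equicontinuous factor of a minimal system is the quotient by the regionally proximal relation; for general $n$ it is the assertion that the $n$-fold regionally proximal relation coincides with the $n$-fold fibre product over $X_{eq}$. The structural backbone is the enveloping semigroup: the canonical homomorphism $\theta\colon E(X)\to E(X_{eq})$ maps onto the group $G=E(X_{eq})$, and since $G$ acts freely on $X_{eq}$ one checks that any $q\in E(X)$ carrying one point of a fibre to another must have $\theta(q)=e$, so $\ker\theta$ acts transitively on each fibre. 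The genuine difficulty is that regional proximality, unlike proximality, is not witnessed by a single semigroup element, so this transitivity must be combined with the maximality of the equicontinuous factor to realise an arbitrary fibre tuple as a true regionally proximal limit; this is where the bulk of the work lies and where I would invoke the structure theory of the maximal equicontinuous factor of a minimal system.
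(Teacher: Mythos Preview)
The paper does not supply its own proof of this statement: it is quoted verbatim as \cite[Theorem 8.2]{HLY} and used as a black box. So there is no in-paper argument to compare against; what you have written is an outline of a proof of the cited result itself.

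Your argument for (1) is correct and is essentially the standard one. For (2), your relocation step---using minimality to push a near-diagonal tuple into a prescribed open set $U$ within a bounded time window, and your compactness argument that the witnessing times $k$ are unbounded---is fine and does reduce the sensitive-tuple property to your ``$n$-fold regionally proximal'' condition. The gap is exactly where you flag it: you do not prove that every tuple in a single $\pi_{eq}$-fibre satisfies that condition. Your enveloping-semigroup remarks (transitivity of $\ker\theta$ on fibres) are correct in spirit but, as you note, do not by themselves produce regionally proximal witnesses; this is precisely the nontrivial content of the Huang--Lu--Ye result, and ``invoke the structure theory'' is not a proof. If you want a self-contained argument you will need either the Ellis--Keynes/Veech machinery showing that for minimal systems the regionally proximal relation coincides with $S_{eq}$ and then an induction/lifting argument to pass from pairs to $n$-tuples, or to follow the route in \cite{HLY} directly. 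As written, your proposal is a correct reduction plus an honest IOU on the main step.
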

 
\subsection{Hyperspace system} Let $X$ be a compact Hausdorff topological space. Let $2^X$ be the set of nonempty
closed subsets of $X$ endowed with the Hausdorff metric.
Let $d$ be the metric on $X$, then the Hausdorff metric $d_H$ on $2^X$ may define as follows:
$$d_H(A, B)
= \max\{\max_{a\in A}
	d(a, B),\max_{b\in B}
	d(b, A)\},$$
where $d(x, A) = \inf_{y\in A} d(x, y)$. Then $(2^X,d_H)$ is a compact metric space. 
Let $\{A_i\}_{i=1}^\infty$ be an arbitrary sequence of subsets of X. 
We say that $\{A_i\}_{i=1}^\infty$
 converges to $A$, denoted by $\lim_{i\to\infty} A_i = A$, if
$$\lim_{i\to\infty}d_H(A_i,A)=0.$$

Let $(X, T)$ be a t.d.s. We can induce a system on $2^X$. The action of $T$ on $2^X$ is given
by $TA = \{Tx : x\in A\}$ for each $A\in 2^X$. Then $(2^X, T)$ is a t.d.s. and it is called
the hyperspace system.

\subsection{Probability measure space}
Let $X$ be a compact metric space. Denote by $\mathcal{B}_X$ the Borel $\sigma$-algebra of $X$ and $\mathcal{M}(X)$ be the set of all Borel probability measures on $X$. For $\mu\in \mathcal{M}(X)$,  denote by $\mathcal{B}_X^\mu$
the completion of $\mathcal{B}_X$ under $\mu$ and denote by $\text{supp} \mu$ the support of $\mu$, i.e. the smallest closed subset of $X$ with full measure. In the weak$^*$ topology, $\mathcal{M}(X)$ is a nonempty compact convex space.

Let $(X,T)$ be  a t.d.s. We say $\mu \in \mathcal{M}(X)$ is $T$-invariant if $\mu (T^{-1} B) = \mu (B)$ holds for all $B \in \mathcal{B}_X^\mu$. Denote by $\mathcal{M}(X,T)$ the set of  $T$-invariant Borel probability measures of $(X,T)$.
We say  $\mu \in \mathcal{M}(X,T)$ is ergodic if for any $T$-invariant Borel set $B \in \mathcal{B}_X^\mu$, $\mu (B) = 0$ or $\mu(B) = 1$ holds. Denote by $\mathcal{M}^e(X,T)$ the set of ergodic measures of $(X,T)$.

\subsection{F{\o}lner sequence and generic points}
A sequence of finite nonempty subsets $\{F_n \}_{n=1}^{\infty} $ of $\Z_+$ is called a \emph{F{\o}lner sequence} if $\lim_{n \to +\infty} \frac {|(F_n+m)\Delta F_n|}{|F_n|} = 0$ for every $m \in \Z_+$, where $F+m=\{i+m:i\in F\}$. 
	A F{\o}lner sequence $\{F_n \}_{n=1}^{\infty} $ of $\Z_+$ is \emph{tempered} if  there exists some $C > 0$ such that 
\begin{equation*}
| \bigcup_{k < n} (F_n-F_k) | \le C | F_n |\text{ for all } n \in \mathbb{N}
\end{equation*}
where $E-F=\{e-f: e\in E,f\in F\}$. 

For every F{\o}lner sequence of $\Z_+$, there is a subsequence which is tempered. 
Similar to Birkhoff pointwise ergodic theorem, Lindenstrauss estabilished pointwise ergodic theorem for tempered F{\o}lner sequences \cite{EL}. 
\begin{thm}\label{thm-l}
	Let $(X,T)$ be a t.d.s.  and $\mu$ be a $T$-invariant Borel probability measure. Let $\{F_n \}_{n=1}^{\infty} $ be a tempered F{\o}lner sequence of $\Z_+$. Then for any $f \in L^1(X,\mu)$, there is a $T$-invariant $f^* \in L^1(X,\mu)$ such that 
	\begin{equation*}
	\lim_{n \to +\infty} \frac 1{|F_n|} \sum_{i\in F_n} f (T^ix) = f^* (x) \ \ a.e.
	\end{equation*} 
	In particular, if $\mu$ is ergodic, one has
	\begin{equation*}
	\lim_{n \to +\infty} \frac 1{|F_n|} \sum_{i\in F_n} f (T^ix) = \int f(x) d\mu (x) \ \ a.e.
	\end{equation*} 
\end{thm}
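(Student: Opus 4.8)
The plan is to follow the standard route to Lindenstrauss's pointwise ergodic theorem: prove a weak-type $(1,1)$ maximal inequality for the averaging operators, verify a.e.\ convergence on a dense subclass of $L^1(X,\mu)$, and then combine the two via the Banach principle to obtain a.e.\ convergence for every $f\in L^1(X,\mu)$. Write $A_nf(x)=\frac1{|F_n|}\sum_{i\in F_n}f(T^ix)$ for the averages and $A^*f(x)=\sup_n A_n|f|(x)$ for the associated maximal function. Since the averages involve only forward iterates, I may first pass to the natural extension to assume $T$ invertible, so that $\{F_n\}_{n=1}^\infty\subset\Z_+\subset\Z$ is a tempered F{\o}lner sequence for a genuine $\Z$-action; the conclusion for the original system is then recovered by projection. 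The whole argument hinges on controlling $A^*$, and it is precisely here that the tempered hypothesis is used.

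First I would establish the maximal inequality
$$\mu\{x:A^*f(x)>\lambda\}\le\frac{C'}{\lambda}\,\|f\|_{L^1(X,\mu)}\qquad(f\in L^1(X,\mu),\ \lambda>0),$$
with a constant $C'$ depending only on the tempering constant $C$. The natural tool is transference: by the Calder\'on transference principle such an inequality on $(X,T)$ reduces to the corresponding combinatorial maximal inequality for convolution by the normalized indicators of $F_n$ on long orbit segments in $\Z$, and the latter follows from a Vitali-type covering lemma for the family $\{F_n\}$. That covering lemma says that from any collection of translates $\{m+F_{n(m)}\}$ one can extract a subfamily covering a fixed proportion of the union with bounded overlap, and the overlap bound is exactly what $|\bigcup_{k<n}(F_n-F_k)|\le C|F_n|$ provides. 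The construction processes the chosen sets in order of decreasing index and uses a randomized (greedy) selection so that the expected multiplicity is controlled by $C$; this is the technical heart of the proof.

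Next I would check a.e.\ convergence on a dense subset of $L^1(X,\mu)$, where the F{\o}lner property makes everything transparent. For a $T$-invariant $f$ the averages $A_nf$ equal $f$ identically, so convergence is trivial. For a coboundary $f=g-g\circ T$ with $g\in L^\infty$, a cancellation across the boundary gives the uniform bound $\|A_nf\|_\infty\le\frac{|(F_n+1)\Delta F_n|}{|F_n|}\,\|g\|_\infty$, which tends to $0$ by the F{\o}lner condition, so $A_nf\to0$ uniformly and a fortiori a.e. By the mean ergodic theorem for the Koopman isometry, the invariant functions together with the closure of the coboundaries span a dense subspace of $L^2(X,\mu)$, hence a dense subset of $L^1(X,\mu)$ since $\mu$ is finite. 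Thus a.e.\ convergence holds on a dense set.

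Finally I would assemble the pieces. The maximal inequality makes the set $\{f\in L^1:A_nf\text{ converges a.e.}\}$ closed in $L^1(X,\mu)$ (Banach principle); since it contains a dense set, it is all of $L^1(X,\mu)$, producing the limit $f^*$. To identify $f^*$, note that for $f\in L^2$ its a.e.\ limit must coincide with the $L^2$ limit supplied by the mean ergodic theorem, namely the orthogonal projection $\E(f\mid\I)$ onto the invariant $\sigma$-algebra $\I$; by density and the maximal inequality the equality $f^*=\E(f\mid\I)$ extends to every $f\in L^1$, which in particular makes $f^*$ $T$-invariant. When $\mu$ is ergodic, $\I$ is trivial and $\E(f\mid\I)=\int f\,d\mu$ a.e., giving the displayed special case. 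The \textbf{main obstacle} is the covering lemma behind the maximal inequality: the ordinary Vitali argument fails for irregular F{\o}lner sets, and it is the tempered condition rather than mere amenability that yields a bounded-overlap cover, so essentially all of the work lies in the combinatorial/probabilistic construction of that cover with constant depending only on $C$.
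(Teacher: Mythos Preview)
Your outline is essentially the standard Lindenstrauss argument and is correct as a sketch; however, the paper does not prove this theorem at all. It is stated in the preliminaries as a quoted result with the attribution ``Lindenstrauss established pointwise ergodic theorem for tempered F{\o}lner sequences \cite{EL}'', and no proof is given. So your proposal goes well beyond what the paper does: you reconstruct the maximal inequality via transference and the tempered covering lemma, then run the Banach principle against the dense class of invariant functions plus $L^\infty$-coboundaries, whereas the paper simply invokes \cite{EL} as a black box. There is nothing to compare on the paper's side beyond the citation.
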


 Let $(X, T)$ be a
t.d.s., $\{F_n\}_{n\ge 1}$ be  a F{\o}lner sequence of $\mathbb{Z}_+$ and $\mu\in \M(X,T)$. We
say that $x_0\in X$ is generic for $\mu$ along $\{F_n\}_{n\ge 1}$ if
$$\frac{1}{|F_n|}\sum_{i\in F_n}\delta_{T^i x_0}\to\mu \text{ weakly* as }n\to\infty$$
where $\delta_x$ is the Dirac mass at $x$. This is equivalent to that for all $f\in C(X)$,
$$\lim_{n\to\infty}\frac{1}{|F_n|}\sum_{i\in F_n}f(T^i x_0)=\int fd \mu.$$
By Theorem \ref{thm-l}, it is easy to see the following corollary.
\begin{cor}\label{cor-1}
Let $(X,T)$ be a t.d.s.  and $\mu$ be an ergodic $T$-invariant Borel probability measure. Let $ \{F_n \}_{n=1}^{\infty} $ be a tempered F{\o}lner sequence of $\Z_+$. Then the set of generic points for $\mu$ along  $ \{F_n \}_{n=1}^{\infty} $ is a
Borel subset of $X$ with $\mu$ measure $1$.
\end{cor}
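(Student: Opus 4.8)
The plan is to reduce the uncountable family of conditions defining genericity (one limiting condition for each $f\in C(X)$) to a countable family, using the separability of $C(X)$, and then to recover the full family by a uniform approximation argument based on Theorem \ref{thm-l}.

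First I would fix a countable subset $\{f_k\}_{k\ge1}$ of $C(X)$ that is dense in the supremum norm; such a set exists because $X$ is a compact metric space, so $C(X)$ is separable. For each $k$, I would set
$$G_k = \Big\{x\in X : \lim_{n\to\infty}\frac{1}{|F_n|}\sum_{i\in F_n} f_k(T^i x) = \int f_k\, d\mu\Big\}.$$
Each average $x\mapsto \frac{1}{|F_n|}\sum_{i\in F_n} f_k(T^i x)$ is continuous (a finite sum of continuous functions), and the set on which a sequence of Borel functions converges to a prescribed constant is Borel, so each $G_k$ is Borel. Applying Theorem \ref{thm-l} to $f_k\in L^1(X,\mu)$, together with the ergodicity of $\mu$, gives $\mu(G_k)=1$. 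Hence $G := \bigcap_{k\ge1} G_k$ is Borel with $\mu(G)=1$.

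Next I would show that $G$ coincides with the set of generic points for $\mu$ along $\{F_n\}$. One inclusion is immediate: every generic point lies in each $G_k$ (since $f_k\in C(X)$), hence in $G$. For the reverse inclusion I would use an $\epsilon/3$ argument. Given $x\in G$, $f\in C(X)$ and $\epsilon>0$, choose $k$ with $\|f-f_k\|_\infty<\epsilon/3$. Since each average is a convex combination of point evaluations,
$$\Big|\frac{1}{|F_n|}\sum_{i\in F_n} f(T^i x)-\frac{1}{|F_n|}\sum_{i\in F_n} f_k(T^i x)\Big|\le \|f-f_k\|_\infty<\frac{\epsilon}{3},$$
and likewise $\big|\int f\,d\mu-\int f_k\,d\mu\big|<\epsilon/3$. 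Because $x\in G_k$, the $f_k$-average converges to $\int f_k\,d\mu$, so the triangle inequality yields $\limsup_{n\to\infty}\big|\frac{1}{|F_n|}\sum_{i\in F_n} f(T^i x)-\int f\,d\mu\big|\le 2\epsilon/3<\epsilon$. As $\epsilon$ is arbitrary, the $f$-average converges to $\int f\,d\mu$, so $x$ is generic. Therefore the set of generic points equals $G$, a Borel set of full measure, which is exactly the claim.

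I do not expect a genuine obstacle here; the content is entirely routine once Theorem \ref{thm-l} is available. The only points requiring a little care are the Borel measurability of each $G_k$ (handled by continuity of the finite averages and the standard measurability of convergence sets) and the passage from the countable dense family to all of $C(X)$ via the uniform estimate above, which works precisely because the F\o lner averages are $1$-Lipschitz with respect to the supremum norm.
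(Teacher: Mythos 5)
Your proof is correct and is exactly the standard argument the paper has in mind: the paper offers no details beyond ``By Theorem \ref{thm-l}, it is easy to see the following corollary,'' and your route---reducing to a countable dense family $\{f_k\}\subset C(X)$, applying the ergodic case of Theorem \ref{thm-l} to each $f_k$, and recovering all of $C(X)$ by the $\epsilon/3$ uniform-norm approximation---is precisely the intended elaboration. Both the Borel measurability of each $G_k$ and the identification of $\bigcap_k G_k$ with the set of generic points are handled correctly.
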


\section{Proof of Theorem \ref{thm-B}}\label{s-4}
In this section, we prove Theorem \ref{thm-B}. We firstly review some notions about frequently stable.
Let $(X, T)$ be a t.d.s. For $x\in X$ and $\delta>0$, denote $B_\delta(x)=\{y\in X:d(x,y)<\delta\}$. A point $x\in X$ is said to be frequently stable
if for every $\epsilon>0$ there is $\delta > 0$ such that
$$\overline{D}\{i\in \Z_+ : \text{diam}(T^iB_\delta(x)) > \epsilon\} < 1.$$
$(X, T)$ is a frequently stable t.d.s. if all points of $X$ are frequently stable.  For a t.d.s. $(X,T)$, denote $(X_{eq}, T_{eq})$ as the  maximal equicontinuous factor  of $(X,T)$ and
 $\pi_{eq}: X\to X_{eq}$ as the factor map. $(X, T)$ is said to be almost automorphic if $\pi_{eq}$ is 
 almost $1$-$1$. If $(X, T )$ is minimal,
then $(X_{eq}, T_{eq})$ is both minimal and uniquely ergodic, and we denote its unique
invariant measure by $\nu_{eq}$.

Now we are going to  prove Theorem \ref{thm-B}.
In fact, we get more general result. 
\begin{thm}\label{thm-C}Let (X, T) be a minimal t.d.s.
	and
	$\pi_{eq}: (X, T) \to (X_{eq}, T_{eq})$. Then the following are equivalent:
	\begin{itemize}
		\item[(1)] $\pi_{eq}$ is almost $1$-$1$;
		\item[(2)]For every $\epsilon>0$ and $x\in X$ there is $\delta > 0$ such that $\overline{BD}\{i\in \Z_+ : \text{diam}(T^iB_\delta(x)) > \epsilon\} < 1$;
		\item[(3)] There exists $x\in X$ such that for every $\epsilon>0$ there is $\delta > 0$ such that $\underline{BD}\{i\in \Z_+ : \text{diam}(T^iB_\delta(x)) > \epsilon\} < 1$.
	\end{itemize}
\end{thm}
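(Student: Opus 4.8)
The plan is to establish the cycle $(1)\Rightarrow(2)\Rightarrow(3)\Rightarrow(1)$. The implication $(2)\Rightarrow(3)$ is immediate: the inequality $\underline{BD}(F)\le\overline{BD}(F)$ recorded in Section~\ref{s-2} shows that for any fixed $x$ the estimate in $(2)$ forces the one in $(3)$, so one keeps the same $x$ and the same $\delta=\delta(\epsilon)$.

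For $(1)\Rightarrow(2)$ I would argue through the fiber map $F\colon X_{eq}\to 2^X$, $F(y)=\pi_{eq}^{-1}(y)$, which is upper semicontinuous, so that $\text{diam}\circ F$ is upper semicontinuous on $X_{eq}$. If $\pi_{eq}$ is almost $1$-$1$ there is $x^*$ with $\pi_{eq}^{-1}\pi_{eq}(x^*)=\{x^*\}$, and at $y^*=\pi_{eq}(x^*)$ the map $F$ is automatically continuous (its value there is a singleton); hence for a prescribed $\epsilon>0$ there is an open $V\ni y^*$ with $\pi_{eq}^{-1}(V)\subseteq B_{\epsilon/2}(x^*)$, whence $\text{diam}\,\pi_{eq}^{-1}(V)\le\epsilon$. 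Fix now an arbitrary $x$ and put $y_0=\pi_{eq}(x)$. Using uniform continuity of $\pi_{eq}$ together with equicontinuity of $(X_{eq},T_{eq})$, choose $\delta>0$ so small that $T_{eq}^i\,\pi_{eq}(B_\delta(x))$ has diameter at most some $\eta$ for all $i$, with $\eta$ small enough that $V'=\{y\in V: B_\eta(y)\subseteq V\}$ is a nonempty open neighbourhood of $y^*$. Then whenever $T_{eq}^i y_0\in V'$ one has $T^iB_\delta(x)\subseteq\pi_{eq}^{-1}(V)$, so $\text{diam}(T^iB_\delta(x))\le\epsilon$. Since $(X_{eq},T_{eq})$ is minimal and uniquely ergodic, $\nu_{eq}$ has full support, so $\nu_{eq}(V')>0$; unique ergodicity then gives uniform convergence of the Birkhoff averages over all intervals, so the ``good'' set $\{i:\text{diam}(T^iB_\delta(x))\le\epsilon\}$ has positive lower Banach density. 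As $\overline{BD}(\{i:\text{diam}(T^iB_\delta(x))>\epsilon\})=1-\underline{BD}(\{i:\text{diam}(T^iB_\delta(x))\le\epsilon\})<1$, this is exactly $(2)$.

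The substance is $(3)\Rightarrow(1)$, which I would prove by contraposition. If $\pi_{eq}$ is not almost $1$-$1$, then the set $\{x:\pi_{eq}^{-1}\pi_{eq}(x)=\{x\}\}$, being $G_\delta$ and $T$-invariant, must be empty (a nonempty invariant $G_\delta$ set in a minimal system is residual); hence every fiber contains at least two points. Fixing any $x$, I want to produce $\epsilon_0>0$ so that for every $\delta>0$ the good set $\{i:\text{diam}(T^iB_\delta(x))\le\epsilon_0\}$ has upper Banach density $0$, which is precisely the negation of $(3)$. The tool that converts a nontrivial fiber into spreading of balls is Theorem~\ref{thm-s}(2): any pair $\{a,b\}$ in a common fiber is a sensitive set, so for every nonempty open $O$ and every neighbourhoods of $a,b$ there is a time $k$ at which $T^kO$ meets both, forcing $\text{diam}(T^kO)$ to exceed a constant comparable to $d(a,b)$.

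The hard part—and where the tempered F{\o}lner machinery enters—is upgrading this single separating time to Banach density one of separating times (equivalently, killing all the good times). I would suppose, for contradiction, that for some $\delta$ the good set has positive upper Banach density, extract along it a tempered F{\o}lner sequence $\{F_n\}$ with $|\{i\in F_n:\text{diam}(T^iB_\delta(x))\le\epsilon_0\}|/|F_n|\to\beta>0$, and form the weak$^*$ limit $\lambda$ on $2^X$ of the empirical measures $\frac{1}{|F_n|}\sum_{i\in F_n}\delta_{T^i\overline{B_\delta(x)}}$; since $A\mapsto\text{diam}(A)$ is continuous, $\lambda(\{\text{diam}\le\epsilon_0\})\ge\beta>0$. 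Projecting by $A\mapsto\pi_{eq}(A)$ into $2^{X_{eq}}$, whose induced system is again equicontinuous and uniquely ergodic, I would disintegrate $\lambda$ over this factor and, passing to an ergodic component and using Lindenstrauss' pointwise theorem (Theorem~\ref{thm-l}, Corollary~\ref{cor-1}) to select generic points, estimate the density of separating times fiberwise. The main obstacle is exactly this last step: a small ball meets only part of a fiber, so smallness of $\text{diam}(T^iB_\delta(x))$ does not by itself control the whole fiber, and one must combine the sensitivity of Theorem~\ref{thm-s} with minimality and the ergodic theorem to show that the set-valued orbit nevertheless attains diameter exceeding $\epsilon_0$ along a density-one set of times, contradicting $\lambda(\{\text{diam}\le\epsilon_0\})>0$. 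Bridging this gap between the behaviour of balls and the size of fibers is where I expect the real work to lie.
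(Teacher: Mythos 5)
Your implications $(2)\Rightarrow(3)$ and $(1)\Rightarrow(2)$ are fine and essentially identical to the paper's (the latter is the argument from \cite{GJY}; your appeal to unique ergodicity for the visit times to $V'$ needs the standard step of passing to a smaller ball with $\nu_{eq}$-null boundary or a continuous minorant, which the paper includes and you gloss over, but that is routine). The problem is $(3)\Rightarrow(1)$, the only nontrivial implication. You assemble the same machinery as the paper — empirical measures on $2^X$ along a tempered F{\o}lner sequence of good times, an ergodic component, generic points via Corollary \ref{cor-1}, and sensitivity via Theorem \ref{thm-s} — but you stop exactly at the decisive step and say so yourself. The route you gesture at (disintegrating over $2^{X_{eq}}$ and estimating separating times fiberwise) runs directly into the obstacle you correctly name: smallness of $\text{diam}(T^i B_\delta(x))$ controls only the part of a fiber the ball meets, and isolated sensitive times from Theorem \ref{thm-s} do not by themselves upgrade to Banach density one. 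As written, the core of the theorem is an acknowledged gap, not a proof.

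The paper closes this gap with two ideas absent from your sketch, and notably the containment goes the opposite way from what you attempt. First (Claim \ref{c-1}): by minimality there is a single $N$ with $X=\bigcup_{n=0}^{N-1}T^{-n}\bigl(T^k\overline{B_\delta(x_*)}\bigr)$ for all $k\in\Z_+$; this covering property survives Hausdorff limits (Lemma \ref{l-1}), so it holds for every $E$ in the support of the ergodic component $\nu_e$, and Baire category together with semi-openness of minimal maps (Lemma \ref{l-2}) then forces each such $E$ to have nonempty interior. A pigeonhole over a finite $\gamma_0$-dense set produces a fixed $x_0$ and $\gamma_0>0$ with $\nu_e(\mathcal{C}(x_0,\gamma_0))>0$: positively many limit sets \emph{contain} the closed ball $\overline{B_{\gamma_0}(x_0)}$, rather than being controlled by one. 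Second (Claim \ref{c-2}): applying Theorem \ref{thm-s}(2) to finite, $\frac{1}{2i}$-dense subsets of a single fiber and using a uniform-continuity window of length $i$, the images $T^{n_i+n}\overline{B_{\gamma_0}(x_0)}$ for $0\le n\le i$ come $\frac{1}{i}$-close to covering entire fibers, all of which have diameter at least $\epsilon_0$ by \eqref{eq-11}; this yields intervals $[\tilde M_i,\tilde N_i)$ on which the proportion of times with $\text{diam}(T^n\overline{B_{\gamma_0}(x_0)})\ge\epsilon_2$ tends to $1$ — the density-one upgrade you were missing. Finally, a generic point $E_0\in\mathcal{C}(x_0,\gamma_0)$ for $\nu_e$ along this tempered sequence satisfies $\text{diam}(T^nE_0)\ge\text{diam}(T^n\overline{B_{\gamma_0}(x_0)})$ by monotonicity, so $\nu_e(\mathcal{E}_{\ge\epsilon_2})=1$, contradicting $\nu_e(\mathcal{E}_{<\epsilon_2})>0$. (This is also why the paper inserts the buffer $\epsilon_1<\epsilon_2<\epsilon_0$, which your use of a single $\epsilon_0$ omits.) These two steps — placing balls inside the limit sets, then expanding a fixed ball to nearly full fibers along long windows — constitute the real content of the theorem, and your proposal does not contain them.
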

To prove Theorem \ref{thm-C}, we need some lemmas.
\begin{lem}\label{l-1} Let $(X,T)$ be a t.d.s. and  $\{B_i\}_{i\in\N}$ be a sequence of  $2^X$. Let $N\in\N$. If $X=\bigcup_{n=0}^{N-1}T^{-n}\left(B_i\right)$ for $i\ge 1$  and $\lim_{i\to\infty}B_i=B_0$, then $X=\bigcup_{n=0}^{N-1}T^{-n}\left(B_0\right)$.
\end{lem}
\begin{proof} Given $x\in X$. 
	Since $ X=\bigcup_{n=0}^{N-1}T^{-n}\left(B_i\right)$, one can find $n_i\in\{0,1,\cdots,N-1\}$ such that 
	$$x\in T^{-n_i}B_i, i\in\N.$$
	Passing by a subsequence, we can assume that $n_i=n_*\in\{0,1,\cdots,N-1\}$. This implies
	$$T^{n_*}x\in B_i,\text{for }i\in\N.$$
	Since $\lim_{i\to\infty}B_i=B_0$, one has  $T^{n_*}x\in B_0$. Hence $x\in T^{-n_*}\left(B_0\right)\subset \bigcup_{n=0}^{N-1}T^{-n}\left(B_0\right)$. By arbitrariness, $X=\bigcup_{n=0}^{N-1}T^{-n}\left(B_0\right)$.
\end{proof}
\begin{lem}\label{l-2} Let $(X,T)$ be a minimal t.d.s. and  $E\in 2^X$. If $X=\bigcup_{n=0}^{\infty}T^{-n}E$, then the interior of $E$ is not empty.
\end{lem}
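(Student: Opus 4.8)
The plan is to combine the Baire category theorem with the fact that a minimal map is semi-open. First I would note that, since $T$ is continuous and $E$ is closed, each $T^{-n}E$ is a closed subset of $X$. As $X$ is a compact metric space it is a Baire space, so the hypothesis $X=\bigcup_{n=0}^{\infty}T^{-n}E$ exhibits $X$ as a countable union of closed sets and forces at least one of them to have nonempty interior. Let $n_0$ be the least index with $\mathrm{int}(T^{-n_0}E)\neq\emptyset$ and pick a nonempty open set $W\subseteq T^{-n_0}E$. If $n_0=0$ we are done, so assume $n_0\ge 1$.

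The key step is to push the nonempty interior backward one level. Here I would invoke that a minimal transformation of a compact metric space is semi-open, i.e.\ $\mathrm{int}(\overline{T(W)})\neq\emptyset$ for every nonempty open $W$. Applying this and using $T(W)\subseteq T^{-(n_0-1)}E$ together with the fact that $T^{-(n_0-1)}E$ is closed (so $\overline{T(W)}\subseteq T^{-(n_0-1)}E$), I obtain $\mathrm{int}(T^{-(n_0-1)}E)\neq\emptyset$, contradicting the minimality of $n_0$. Hence $n_0=0$, that is $\mathrm{int}(E)\neq\emptyset$. Equivalently one may argue by contraposition: semi-openness is precisely the statement that $T^{-1}$ carries closed nowhere dense sets to nowhere dense sets, so if $\mathrm{int}(E)=\emptyset$ then every $T^{-n}E$ is nowhere dense, $\bigcup_{n}T^{-n}E$ is meager, and cannot equal the Baire space $X$.

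The main obstacle is establishing semi-openness, since it is exactly the place where minimality enters (the lemma fails without it: a constant map has $X=\bigcup_{n}T^{-n}\{pt\}$ yet empty interior, and is not semi-open). I would record this as a separate ingredient, citing the theorem of Kolyada--Snoha--Trofimchuk that minimal maps are semi-open, or sketch its proof. The proof begins with the observation that minimality and compactness give some $N$ with $X=\bigcup_{k=0}^{N}T^{-k}W$, and then, using surjectivity of $T$ (hence of $T^{N}$), the forward covering $X=\bigcup_{j=0}^{N}T^{j}(W)$; a Baire argument on the finitely many closed sets $\overline{T^{j}(W)}$ shows that some forward image is somewhere dense. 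The genuinely delicate point, which I expect to be the crux, is transferring this to $\overline{T(W)}$ itself; the purely measure-theoretic input available here is insufficient for it, since an invariant measure of full support only yields $\mu(E)>0$ rather than nonempty interior.
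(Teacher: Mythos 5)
Your proposal is correct and follows essentially the same route as the paper: a Baire category argument produces some $T^{-n}E$ with nonempty interior, and the semi-openness of minimal maps (cited, as in the paper, from Kolyada--Snoha--Trofimchuk) transfers the nonempty interior down to $E$ itself. The ``delicate point'' you worry about at the end is not an issue, since both you and the paper simply invoke the semi-openness theorem rather than reproving it.
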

\begin{proof}By Baire Category Theorem, there is $n\in\ N$ such that the interior of $T^{-n}E$ is not empty. Since $(X,T)$ is minimal, $T$ is semi open \cite[Theorem 2.5]{S}. And then the interior of $E$ is not empty.
	\end{proof}
Now we prove Theorem \ref{thm-C}.
\begin{proof}(2)$\Rightarrow$(3) is trivial. The proof of (1)$\Rightarrow$(2) is from \cite[Theorem 3.4]{GJY}.  
	For completeness, we repeat it. By hypothesis, there
	exists $y_0\in X_{eq}$ such that $|\pi_{eq}^{-1}(y_0)| = 1$. Let $\epsilon > 0$. There exists $\eta > 0$ such
	that
	$$\text{diam}(\pi_{eq}^{-1}(B_\eta(y))\le\epsilon$$
	for every $y\in B_\eta(y_0)$. Since $\nu_{eq}$ is fully supported, $B_\eta(y_0)$ has positive measure and (using standard arguments) must contain a smaller ball with positive
	measure and $\nu_{eq}$-null boundary. Thus, by strict ergodicity, we have that
	$$\underline{BD}\{n \in N : T_{eq}^ny\in B_\eta(y_0)\} > 0$$
	for every $y\in X_{eq}$.
	Given $x\in X$. Since $\pi_{eq}$ is continuous, there exists $\delta > 0$ such that
$\text{diam}(\pi_{eq}(B_\delta(x)))<\eta.$
	Without loss of generality, we may assume $T_{eq}$ is an isometry. This implies that
	$$B_\eta(T^n_{eq}
	\pi_{eq}(x)) = T^n_{eq}
	B_\eta(\pi_{eq}^{-1}(x))$$
	for every $n\in\N$. Consequently
	$$T^n(B_\delta(x))\subset\pi_{eq}^{-1}(B_\eta(T^n_{eq}\pi_{eq}(x)))$$
	for all $n\in \N$. So, if $T^n_{eq}\pi_{eq}(x)\in B_\eta(y_0)$ then
	$\text{diam}(T^nB_\delta(x)) < \epsilon$;
	since this happens with positive lower Banach density, we conclude  $\overline{BD}\{i\in \Z_+ : \text{diam}(T^iB_\delta(x)) > \epsilon\} < 1$ for $x\in X$.
	
	(3)$\Rightarrow$(1). Assume that $\pi_{eq}$ is not almost $1$-$1$. Then 
	\begin{align}\label{eq-11}\epsilon_0:=\inf_{y\in X_{eq}}\text{diam}(\pi^{-1}(y))>0.
	\end{align}
	Given $0<\epsilon_1<\epsilon_2<\epsilon_0$.  By hypothesis,  there is $x_*\in X$ and $\delta>0$ such that 
	$$\underline{BD}\{i\in \Z_+ : \text{diam}(T^i\overline{B_\delta(x_*)}) > \epsilon_1\} < 1.$$
	There is two sequences $\{N_i\}_{i=1}^\infty,\{M_i\}_{i=1}^\infty$ of $\N$ with $\lim_{i\to\infty}(N_i-M_i)=\infty$ such that 
\begin{align}\label{eq-0}\lim_{i\to\infty}\frac{|\{n\in \Z_+ : \text{diam}(T^n\overline{B_\delta(x_*)}) \le\epsilon_1\}\cap[M_i,N_i-1]|}{N_i-M_i}>0.
\end{align}
Passing by a subsequence, assume
$$\lim_{i\to\infty}\frac{1}{N_i-M_i}\sum_{n=M_i}^{N_i-1}\delta_{T^n\overline{B_\delta(x_*)}}:=\nu\in\mathcal{M}(2^X,T).$$
Denote $\mathcal{E}_{\le \epsilon_1}:=\{E\in 2^X:\text{diam}(E)\le \epsilon_1\}$ and $\mathcal{E}_{<  \epsilon_2}:=\{E\in 2^X:\text{diam}(E)<\epsilon_2\}$.  $\mathcal{E}_{\le \epsilon_1}$ is a closed set and $\mathcal{E}_{< \epsilon_2}$ is an open set. Both of them are Borel measurable. By\eqref{eq-0}, $\nu(\mathcal{E}_{\le \epsilon_1})>0$. There is an ergodic $\nu_e$ with $\text{supp}\nu_e\subset \text{supp}\nu$ such that \begin{align}\label{1}\nu_e(\mathcal{E}_{<\epsilon_2})\ge \nu_e(\mathcal{E}_{\le \epsilon_1})>0.
\end{align} Denote 
$$\mathcal{E}_{\ge \epsilon_2}=\{E\in 2^X:\text{diam}(E)\ge \epsilon_2 \}.$$
It is clear that $\mathcal{E}_{\ge \epsilon_2}$ is a closed subset of $2^X$. Next we are going to show that $\nu(\mathcal{E}_{\ge \epsilon_2})=1$. This conflicts with \eqref{1} and then $\pi_{eq}$ is almost $1$-$1$.

For $x\in X$ and $\gamma>0$, denote 
$$\mathcal{C}(x,\gamma)=\{E\in 2^X:\overline{B_\gamma(x)}\subset E\}.$$

It is clear that each $\mathcal{C}(x,\gamma)$  is a closed subset of $2^X$ and then is Borel measurable.  We have the following claim.
\begin{cl}\label{c-1}There is $x_0\in X$ and $\gamma_0>0$ such that $\nu_e(\mathcal{C}(x_0,\gamma_0))>0$.
\end{cl}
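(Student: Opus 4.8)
The plan is to show that $\nu_e$-almost every $E\in 2^X$ has nonempty interior, and then to pin down a single closed ball by a countable covering argument. Since $\nu$ is a weak$^*$ limit of empirical measures supported along the forward orbit of $\overline{B_\delta(x_*)}$ in the hyperspace, we have $\mathrm{supp}\,\nu\subset\overline{\mathrm{Orb}(\overline{B_\delta(x_*)},T)}$, and $\mathrm{supp}\,\nu_e\subset\mathrm{supp}\,\nu$. As $\nu_e(\mathrm{supp}\,\nu_e)=1$, it therefore suffices to prove the purely topological statement that \emph{every} element of the hyperspace orbit closure of $\overline{B_\delta(x_*)}$ has nonempty interior.

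To get this, I would first produce a uniform finite subcover. Because $(X,T)$ is minimal every orbit is dense, so $\{T^{-n}B_\delta(x_*)\}_{n\ge 0}$ is an open cover of the compact space $X$; hence there is $N\in\N$ with $X=\bigcup_{n=0}^{N-1}T^{-n}(\overline{B_\delta(x_*)})$. The crucial point is to transport this cover to every forward iterate $T^m\overline{B_\delta(x_*)}$. Minimality makes $T$, hence each $T^m$, surjective, so given $y\in X$ one may choose $x$ with $T^m x=y$; applying the cover to $x$ yields $n\in\{0,\dots,N-1\}$ with $T^n x\in\overline{B_\delta(x_*)}$, and then $T^n y=T^m(T^n x)\in T^m\overline{B_\delta(x_*)}$. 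Thus $X=\bigcup_{n=0}^{N-1}T^{-n}(T^m\overline{B_\delta(x_*)})$ for every $m\ge 0$, with the \emph{same} $N$. Now if $E=\lim_k T^{m_k}\overline{B_\delta(x_*)}$ is any point of the orbit closure, Lemma \ref{l-1} applied to $B_i=T^{m_{k_i}}\overline{B_\delta(x_*)}$ and $B_0=E$ gives $X=\bigcup_{n=0}^{N-1}T^{-n}(E)$, and then Lemma \ref{l-2} forces $\mathrm{int}(E)\neq\emptyset$.

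Finally I would run the covering argument. Fix a countable dense set $D\subset X$. If $\mathrm{int}(E)\neq\emptyset$ then $E$ contains some open ball $B_r(z)$, so choosing $x_0\in D$ with $d(x_0,z)<r/4$ and a rational $\gamma_0<r/4$ gives $\overline{B_{\gamma_0}(x_0)}\subset E$, i.e. $E\in\mathcal{C}(x_0,\gamma_0)$. Hence $\{E:\mathrm{int}(E)\neq\emptyset\}\subset\bigcup_{x_0\in D,\ \gamma_0\in\mathbb{Q}^+}\mathcal{C}(x_0,\gamma_0)$, a countable union of Borel sets whose left side has full $\nu_e$-measure; by countable subadditivity some $\mathcal{C}(x_0,\gamma_0)$ must satisfy $\nu_e(\mathcal{C}(x_0,\gamma_0))>0$, which is the claim. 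The step I expect to be the main obstacle is propagating the finite cover to all forward iterates, since $T$ need not be invertible: it is exactly the surjectivity argument that guarantees a single uniform $N$ works for every $T^m\overline{B_\delta(x_*)}$ and thus lets Lemma \ref{l-1} apply along the convergent subsequence.
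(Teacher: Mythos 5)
Your proof is correct and follows essentially the same route as the paper: the same containment $\mathrm{supp}\,\nu_e\subset\overline{\mathrm{Orb}(\overline{B_\delta(x_*)},T)}$, the same uniform cover $X=\bigcup_{n=0}^{N-1}T^{-n}\left(T^m\overline{B_\delta(x_*)}\right)$ fed into Lemmas \ref{l-1} and \ref{l-2} to show every $E\in\mathrm{supp}\,\nu_e$ has nonempty interior, followed by a pigeonhole over sets $\mathcal{C}(x_0,\gamma_0)$. The only (harmless) differences are that you pigeonhole over countably many balls with centers in a countable dense set and rational radii, where the paper first finds $\gamma$ with $\nu_e(\mathcal{C}(\gamma))>0$ and then uses a finite $\gamma_0$-dense set, and that you spell out, via surjectivity of $T^m$ from minimality, why a single $N$ works for all forward iterates of $\overline{B_\delta(x_*)}$ --- a detail the paper asserts without proof.
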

\begin{proof}For $\gamma>0$, denote 
	$$\mathcal{C}(\gamma)=\{E\in 2^X:\text{there is }x\in X \text{ such that }\overline{B_\gamma(x)}\subset E\}.$$ It is clear that $\mathcal{C}(\gamma)$ is a compact subset of $2^X$ and then Borel measurable. Since $(X,T)$ is minimal, there exists $N\in\N$ such that
	\begin{align}\label{eq-3}X=\bigcup_{n=0}^{N-1}T^{-n}\left(T^k\overline{B_\delta(x_*)}\right)\text{ for }k\in\Z_+.
	\end{align}  Notice that $$\text{supp}\nu_e\subset\overline{\text{Orb}(\overline{B_\delta(x_*)},T)}.$$ By \eqref{eq-3} and  Lemma \ref{l-1}, $X=\bigcup_{n=0}^{N-1}T^{-n}\left(E\right)$ for all $E\in \text{supp}\nu_e$. By Lemma \ref{l-2}, for any $E\in \text{supp}\nu_e$, the interior of $E$ is not empty. Hence 
	$$\text{supp}\nu_e\subset \bigcup_{\gamma>0}\mathcal{C}(\gamma).$$
	We can find $\gamma>0$ such that $\nu_e(\mathcal{C}(\gamma))>0$. Put $\gamma_0=\frac{\gamma}{2}$. Let $F$ be a finite and $\gamma_0$-dense subset of $X$. Then $X=\bigcup_{x\in F}B_{\gamma_0}(x)$. 
	For $E\in \mathcal{C}(\gamma)$, there is $x_E\in X$ such that $\overline{B_\gamma(x_E)}\subset E$. There is $x_E'\in F$ such that $x_E\in B_{\gamma_0}(x_E')$. This implies $\overline{B_{\gamma_0}(x_E')}\subset\overline{B_\gamma(x_E)}\subset E$. Therefore,
	$\mathcal{C}(\gamma)\subset \bigcup_{x\in F}\mathcal{C}(x,\gamma_0)$.
	Hence there is $x_0\in F$ such that $\nu_e(\mathcal{C}(x_0,\gamma_0))>0$.
	\end{proof}
Let $x_0\in X$ and $\gamma_0>0$ be as in Claim \ref{c-1}.
\begin{cl}\label{c-2}There exists two sequences $\{\tilde N_i\}_{i=1}^\infty$, $\{\tilde M_i\}_{i=1}^\infty$ of  $\N$ with $\lim_{i\to\infty}(\tilde N_i-\tilde M_i)=\infty$ such that 
		$$\lim_{i\to\infty}\frac{|\{n\in \Z_+ : \text{diam}(T^n\overline{B_{\gamma_0}(x_0)}) \ge \epsilon_2\}\cap[\tilde M_i,\tilde N_i-1]|}{\tilde N_i-\tilde M_i}=1.$$
\end{cl}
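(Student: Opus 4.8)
The plan is to prove the stronger statement that the set $D=\{n\in\Z_+:\text{diam}(T^n\overline{B_{\gamma_0}(x_0)})\ge\epsilon_2\}$ contains arbitrarily long runs of consecutive integers; taking these runs as the intervals $[\tilde M_i,\tilde N_i)$, with lengths tending to $\infty$, then makes the frequency in the claim equal to $1$ exactly. Only the fact that $\overline{B_{\gamma_0}(x_0)}$ contains the nonempty open ball $U:=B_{\gamma_0}(x_0)$ will be used; the measure-theoretic data from Claim \ref{c-1} play no role here and are relevant only for the later passage to $\nu$.

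First I would record two preliminaries. Since $(X,T)$ is minimal, $T$ is surjective, and since $T_{eq}$ is a homeomorphism of $X_{eq}$ with $\pi_{eq}\circ T=T_{eq}\circ\pi_{eq}$, one obtains $T^j(\pi_{eq}^{-1}(y))=\pi_{eq}^{-1}(T_{eq}^jy)$ for every $y$ and $j$; in particular, by \eqref{eq-11}, every fiber image $T^j(\pi_{eq}^{-1}(y))$ has diameter $\ge\epsilon_0$. Fix $\epsilon_2<\epsilon_0'<\epsilon_0$. Given a target length $L$, pick $y^*\in X_{eq}$; for each $0\le j<L$ choose $u_j,v_j\in\pi_{eq}^{-1}(T_{eq}^jy^*)$ with $d(u_j,v_j)\ge\epsilon_0'$ (possible since that fiber has diameter $\ge\epsilon_0>\epsilon_0'$) and, using surjectivity of $T^j$ between the fibers, pull them back to $p_j,q_j\in\pi_{eq}^{-1}(y^*)$ with $T^jp_j=u_j$ and $T^jq_j=v_j$. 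Put $P=\{p_j,q_j:0\le j<L\}\subset\pi_{eq}^{-1}(y^*)$. As $P$ lies in a single fiber and $|P|\ge2$, Theorem \ref{thm-s}(2) guarantees that $P$ is a sensitive set.

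Next I would use sensitivity of $P$ to pre-load the entire window $[0,L)$ at once. Because $\{T^j\}_{0\le j<L}$ is a finite family of continuous maps on the compact space $X$, there is $\delta_L>0$ such that $d(a,b)<\delta_L$ forces $d(T^ja,T^jb)<(\epsilon_0'-\epsilon_2)/2$ for all $j<L$. Applying the sensitive-set property of $P$ with probe $U=B_{\gamma_0}(x_0)$ and target neighbourhoods $B_{\delta_L}(p)$, $p\in P$, yields a single $k\in\N$ and points $z_p\in U$ with $T^kz_p\in B_{\delta_L}(p)$ for every $p\in P$. For each $j<L$ the points $z_{p_j},z_{q_j}$ lie in $U\subset\overline{B_{\gamma_0}(x_0)}$, and the triangle inequality together with the choice of $\delta_L$ gives $d(T^{k+j}z_{p_j},T^{k+j}z_{q_j})\ge d(u_j,v_j)-(\epsilon_0'-\epsilon_2)\ge\epsilon_2$, whence $\text{diam}(T^{k+j}\overline{B_{\gamma_0}(x_0)})\ge\epsilon_2$. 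Thus $\{k,k+1,\dots,k+L-1\}\subset D$, and letting $L\to\infty$ produces intervals $[\tilde M_i,\tilde N_i)=[k_i,k_i+L_i)$ with $L_i\to\infty$ on which $D$ has frequency $1$.

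The main obstacle is the non-uniformity of the moduli of continuity of the iterates $T^j$: trying to keep a single $\epsilon_2$-separated pair separated over a long block of times fails, since such a pair (being proximal along its fiber) may come close at intermediate times, and the distortion of $T^j$ grows with $j$. The device that defeats this is to fix the window length $L$ first, to encode a separating pair for every time $j<L$ simultaneously inside one sensitive set lying in a single fiber, and only then to choose the target radius $\delta_L$ small enough for the finitely many maps $T^0,\dots,T^{L-1}$; a single invocation of the sensitive-set property then forces the image of the ball to be $\epsilon_2$-spread at every time of the window. The fibers-onto-fibers identity, which keeps all the relevant fiber images of diameter $\ge\epsilon_0>\epsilon_2$, is precisely what makes a separating pair available at each time $j$.
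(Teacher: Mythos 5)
Your proposal is correct and follows essentially the same route as the paper's proof: both encode the entire window of length $L$ into one finite subset of a single fiber $\pi_{eq}^{-1}(y)$ (using $T^j(\pi_{eq}^{-1}(y))=\pi_{eq}^{-1}(T_{eq}^jy)$ and \eqref{eq-11}), fix a uniform-continuity modulus for the finitely many maps $T^0,\dots,T^{L-1}$, apply Theorem \ref{thm-s}(2) once with the probe $B_{\gamma_0}(x_0)$, and conclude that $D$ contains runs of length $L$, so the intervals $[\tilde M_i,\tilde N_i)$ carry frequency exactly $1$. The only difference is cosmetic: the paper pulls back $\tfrac{1}{2i}$-dense subsets of the image fibers to get $\operatorname{diam}(T^{n_i+n}\overline{B_{\gamma_0}(x_0)})\ge \epsilon_0-\tfrac{1}{i}$, whereas you pull back a single $\epsilon_0'$-separated pair per time $j$, which already yields $\operatorname{diam}\ge\epsilon_2$ and is slightly leaner.
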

	\begin{proof}Given $y\in X_{eq}$. Notice that $\pi^{-1}(T_{eq}^ny)=T^n(\pi^{-1} y)$ for each $n\in\N$. For $i\in\N$, we can find $k_i\in\N$ and a finite subset $\{x^{(i)}\}_{i=1}^{k_i}$ of $\pi_{eq}^{-1}(y)$ such that 
		$T^n\{x^{(i)}_1,\cdots,x^{(i)}_{k_i}\}$ is $\frac{1}{2i}$ dense in $\pi^{-1}(T_{eq}^n y)$ for $n=0,1,\cdots,i$. There is $\xi_i>0$ such that 
		$$x,\tilde x\in X, d(x,\tilde x)<\xi_i\text{ implies }d(T^nx,T^nx')<\frac{1}{2i}\text{ for }n=0,1,\cdots,i.$$ 
		By Theorem \ref{thm-s}, $\{x^{(i)}\}_{i=1}^{k_i}$ is an S-set.  There is $n_i\in\N$ and $\{\tilde x^{(i)}_k\}_{k=1}^{k_i}\subset B_{\gamma_0}(x_0)$ such that 
		$$d(x^{(i)}_k,T^{n_i}\tilde x^{(i)}_k)<\xi_i\text{ for }k=1,2,\cdots,k_i.$$
		This implies
			$$d(T^nx^{(i)}_k,T^{n+n_i}\tilde x^{(i)}_k)<\frac{1}{2i}\text{ for }k=1,2,\cdots,k_i\text{ and }n=0,1,\cdots,i.$$
Since 	$T^n\{x^{(i)}_1,\cdots,x^{(i)}_{k_i}\}$ is $\frac{1}{2i}$ dense in $\pi^{-1}(T_{eq}^n y)$, one has $$\pi^{-1}(T_{eq}^ny)\subset \bigcup_{k=1}^{k_i}B_{\frac{1}{i}}(T^{n+n_i}\tilde x^{(i)}_k).$$ Hence, for $n=0,1,\cdots,i$,
		$$\text{diam}(T^{n_i+n}\overline{B_{\gamma_0}(x_0)})\ge \text{diam}(T^n(\{T^{n_i}\tilde x^{(i)}_1,\cdots,\tilde T^{n_i}x^{(i)}_{k_i}\}))\ge \epsilon_0-\frac{1}{i}.$$
		Denote $\tilde M_i=n_i$ and $\tilde N_i=n_i+i$ for $i\in\N$. Since $\epsilon_2\in(0,\epsilon_0)$, one has 
			$$\lim_{i\to\infty}\frac{|\{n\in \Z_+ : \text{diam}(T^n\overline{B_{\gamma_0}(x_0)}) \ge \epsilon_2\}\cap[\tilde M_i,\tilde N_i-1]|}{\tilde N_i-\tilde M_i}=1.$$
			This ends the proof of Claim \ref{c-2}.
		\end{proof}
	Let $\{\tilde N_i\}_{i=1}^\infty,\{\tilde M_i\}_{i=1}^\infty$ be as in Claim \ref{c-2}. Passing by a subsequence, we can assume that $\{[\tilde M_i,\tilde N_i-1]\}_{i\in \N}$ is a tempered F{\o}lner sequence. By Claim \ref{c-1}, $\nu_e( \mathcal{C}(x_0,\gamma_0))>0$. By Corollary \ref{cor-1}, there is a generic point $E_0\in \mathcal{C}(x_0,\gamma_0)$ such that 
$$\nu_e=\lim_{i\to\infty}\frac{1}{\tilde N_i-\tilde M_i}\sum_{n=\tilde M_i}^{\tilde N_i-1}\delta_{T^nE_0}.$$
Then by Claim \ref{c-2}, $\nu_e(\mathcal{E}_{\ge \epsilon_2})=1$. This conflicts with \eqref{1} and then $\pi_{eq}$ is almost $1$-$1$.
	\end{proof}
\begin{rem}Same argument shows that Theorem \ref{thm-B} holds for all minimal countable abelian group actions on compact metric space. It is not clear whether Theorem \ref{thm-B}  holds for minimal countable amenable group actions  on compact metric space.
\end{rem}

 \noindent{\bf Acknowledgment.} 
L. Xu is partially supported by NNSF of China (12031019, 12371197).

\end{document}